\theoremstyle{plain}
\newtheorem{thm}{Theorem}[section]
\newtheorem{prop}[thm]{Proposition}
\newtheorem{lem}[thm]{Lemma}
\newtheorem{cor}[thm]{Corollary}
\numberwithin{equation}{section}
\theoremstyle{definition}
\theoremstyle{remark}
\newtheorem*{acknowledgements}{Acknowledgements}
\theoremstyle{plain}
\newcommand{\thmref}[1]{Theorem~\ref{#1}}
\newcommand{\propref}[1]{Proposition~\ref{#1}}
\newcommand{\secref}[1]{Section~\ref{#1}}
\newcommand{\lemref}[1]{Lemma~\ref{#1}}
\newcommand{\corref}[1]{Corollary~\ref{#1}}
\newcommand{\figref}[1]{Figure~\ref{#1}}
\newcommand{\eqnref}[1]{Equation~\eqref{#1}}
\newcommand{\calC}{{\mathcal C}}
\newcommand{\calH}{{\mathcal H}}
\newcommand{\T}{{\mathcal T}}
\newcommand{\calV}{{\mathcal V}}
\newcommand{\NN}{{\mathbb N}}
\newcommand{\CC}{{\mathbb C}}
\newcommand{\RR}{{\mathbb R}}
\newcommand{\PP}{{\mathbb P}}
\newcommand{\ZZ}{{\mathbb Z}}
\newcommand{\st}{{\, \big| \,}}
\newcommand{\param}{{\mathchoice{\mkern1mu\mbox{\raise2.2pt\hbox{$
\centerdot$}}
\mkern1mu}{\mkern1mu\mbox{\raise2.2pt\hbox{$\centerdot$}}\mkern1mu}{
\mkern1.5mu\centerdot\mkern1.5mu}{\mkern1.5mu\centerdot\mkern1.5mu}}}
\DeclareMathOperator{\ext}{EL}
\DeclareMathOperator{\area}{area}
\DeclareMathOperator{\im}{Im}
\DeclareMathOperator{\mcg}{MCG}
\newcommand{\mf}{\mathcal{MF}}
\newcommand{\scc}{\calC}
\newcommand{\gm}{\overline{\T}^\mathrm{\scriptscriptstyle GM}}
\newcommand{\teich}{\T}
\DeclareMathOperator{\SL}{SL}
\newcommand{\GM}{{Gardiner--Masur}}
\begin{document}

\title[A divergent horocycle in the horofunction compactification]{A divergent horocycle in the horofunction compactification of the Teichm\"uller metric}
\author{Maxime Fortier Bourque}
\address{School of Mathematics and Statistics, University of Glasgow, University Place, Glasgow, United Kingdom, G12 8QQ}
\email{maxime.fortier-bourque@glasgow.ac.uk}

\begin{abstract}
We give an example of a horocycle in the Teichm\"uller space of the five-times-punctured sphere that does not converge in the Gar\-di\-ner--Masur compactification, or equivalently in the horofunction compactification of the Teichm\"uller metric. As an intermediate step, we exhibit a simple closed curve whose extremal length is periodic but not constant along the horocycle. The example lifts to any Teichm\"uller space of complex dimension greater than one via covering constructions.
\end{abstract}

\maketitle

\section{Introduction}

In \cite{GardinerMasur}, Gardiner and Masur defined a compactification of Teichm\"uller space which mi\-mics Thurston's compactification \cite{Thurston}, but uses extremal length instead of hyperbolic length. Since the Teichm\"uller distance can be computed in terms of extremal lengths of simple closed curves \cite{KerckhoffTeich}, one expects the \GM{} com\-pac\-ti\-fi\-ca\-tion to interact nicely with this metric. This is indeed the case, for it turns out that the \GM{} compactification is isomorphic to the horofunction compactification of the Teichm\"uller metric \cite{LiuSu}. In particular, all Teichm\"uller geodesic rays converge in the \GM{} compactification. There is even an explicit formula for their limits \cite{WalshTeich}. In contrast, Teichm\"uller rays can accumulate onto intervals \cite{Lenzhen,LLR,CMW}, circles \cite{BLMR}, and even higher-dimensional sets \cite{LMR} in the Thurston boundary.

Besides Teichm\"uller geodesics, another family of paths that are used extensively in Teichm\"uller dynamics are the horocycles obtained by shearing half-trans\-la\-tion structures (coming from quadratic differentials) with the matrices \[h_t = \begin{pmatrix} 1 & t \\ 0 & 1\end{pmatrix}\]
for $t\in\RR$.
These are called horocycles because the  Teichm\"uller disk gene\-rated by a quadratic differential $q$ is isometric to the hyperbolic plane, and the path $\{ h_t\,  q \st t \in \RR \}$ traces a horocycle (i.e., a limit of circles whose centers go off to infinity) in this plane. Since horocycles converge in the horofunction compacti\-fi\-cation of the hyperbolic plane (which is the same as the visual compactification) as $t$ tends to $\pm \infty$, it is natural to ask whether they converge in the horofunction compactification of Teichm\"uller space. That is the case whenever the horizontal foliation of $q$ consists of a single cylinder \cite[Theorem 17]{Alberge} or is uniquely ergodic \cite[Theorem 20]{Alberge} \cite[Theorem 1.3]{JiangSu}. However, the goal of this note is to give an example of a horocycle that does not converge in the horofunction (or Gardiner--Masur) compactification. The example is then lifted to all Teichm\"uller spaces of Riemann surfaces of genus $g$ with $p$ punctures such that $3g+p>4$ via covering constructions. 

\begin{thm} \label{thm:main}
In every Teichm\"uller space of complex dimension greater than one, there exists a horocycle which does not converge in the horofunction compactification of the Teichm\"uller metric.
\end{thm}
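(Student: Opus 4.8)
The plan is to construct the divergent horocycle first on the five-times-punctured sphere $S_{0,5}$, whose Teichm\"uller space $\teich(S_{0,5})$ has complex dimension two, and then to propagate it. I would begin from a lattice (Veech) surface: take a square-tiled translation surface of genus two in the stratum $\calH(2)$ --- for instance the three-square ``L'' --- and quotient by its hyperelliptic involution, which acts with derivative $-I$ and has six fixed points. Marking five of their images as punctures and keeping the sixth (the image of the order-two zero) produces a half-translation structure $q$ on $S_{0,5}$ whose horizontal foliation is a union of at least two cylinders $C_1,C_2$ of distinct moduli, with core curves $c_1,c_2$. Because the surface is Veech, the horizontal direction is parabolic: there are $s>0$ and a multitwist $\phi=T_{c_1}^{m_1}T_{c_2}^{m_2}$ with $h_s q=\phi_* q$ in $\teich(S_{0,5})$, and since the derivatives of $h_t$ and $\phi$ are commuting unipotents, $h_{t+s}q=\phi_*(h_t q)$ for all $t$. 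Thus the horocycle is periodic modulo $\phi$ and projects to a closed loop in moduli space.

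From this periodicity I would first extract the intermediate step. Any simple closed curve $\gamma_0$ disjoint from $c_1$ and $c_2$ satisfies $\phi^n\gamma_0=\gamma_0$, so $\ext_{h_t q}(\gamma_0)=\ext_{h_{r}q}(\gamma_0)$ with $r=t\bmod s$ is periodic in $t$; I would then show it is \emph{non-constant} by comparing the flat geometry at two phases of the loop (say $r=0$ and $r=s/2$), where the two cylinders are sheared by different amounts. The decisive information, however, comes from curves that cross the cylinders. Writing $t=ns+r$ gives $\ext_{h_t q}(\gamma)=\ext_{h_r q}(\phi^n\gamma)$, and since $\tfrac1n\phi^n\gamma\to\mu_\gamma:=m_1\,i(\gamma,c_1)\,c_1+m_2\,i(\gamma,c_2)\,c_2$ in $\mf$, the continuity and degree-two homogeneity of extremal length on $\mf$ \cite{GardinerMasur} yield $\tfrac1n\sqrt{\ext_{h_r q}(\phi^n\gamma)}\to\sqrt{\ext_{h_r q}(\mu_\gamma)}$. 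Hence the leading coefficient of $\sqrt{\ext_{h_t q}(\gamma)}$ is governed by the extremal length, in the periodically varying surface $h_r q$, of the multicurve $\mu_\gamma$ supported on the cores.

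To prove divergence I would show that the projective class of $\gamma\mapsto\ext_{h_r q}(\mu_\gamma)$ --- equivalently the shape of the quadratic form $(\lambda_1,\lambda_2)\mapsto\ext_{h_r q}(\lambda_1c_1+\lambda_2c_2)$ --- genuinely changes, and not merely by a scalar, as $r$ runs over one period. The mechanism is the relative twist between $C_1$ and $C_2$, which grows because they have different moduli; I expect the off-diagonal coupling of this form to be measured precisely by $\ext_{h_r q}(\gamma_0)$, so that the non-constancy established above forces the form to change projective class. Two crossing curves whose extremal-length ratio is then periodic and non-constant produce two distinct subsequential limits of $[\sqrt{\ext_{h_t q}(\param)}]$, so the horocycle cannot converge in the \GM{} compactification, which by \cite{LiuSu} is the horofunction compactification of the Teichm\"uller metric; concretely, the normalized horofunctions $d(\param,h_t q)-d(X_0,h_t q)$ reduce to $\tfrac12\log$ of a ratio of suprema of $\ext_{h_r q}(\mu_\gamma)/\ext_{Z}(\gamma)$, whose oscillation in $r$ prevents a pointwise limit.

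Finally, to reach every Teichm\"uller space of complex dimension greater than one, i.e.\ every $S_{g,p}$ with $3g+p>4$, I would pull the example back along a branched cover $\pi\from S_{g,p}\to S_{0,5}$, choosing the branch data to achieve the prescribed genus and number of punctures. The $\SL(2,\RR)$-action commutes with pullback, so the lifted horocycle is $h_t(\pi^*q)=\pi^*(h_t q)$, and because the cores and the curve $\gamma_0$ lift with controlled extremal length the oscillation survives upstairs. The main obstacle is the extremal-length analysis of the third paragraph: individual extremal lengths merely blow up like $t^2$ while their leading intersection data converge in $\pmf$, so the divergence is invisible topologically and lives entirely in the finer conformal asymptotics. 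Establishing that the two-cylinder form changes \emph{shape} with the phase $r$ --- equivalently that $\gamma_0$ is truly non-constant rather than accidentally constant --- is where sharp two-sided estimates are needed, and I would aim to make these exact by exploiting the symmetry of the square-tiled example.
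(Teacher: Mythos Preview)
Your overall architecture matches the paper's: periodicity of the horocycle under a Dehn multitwist $\phi$, convergence of each arithmetic subsequence via the Ivanov-type limit $\tfrac1n\phi^n\gamma\to\mu_\gamma$, and propagation to all $S_{g,p}$ with $3g+p>4$ by pulling back along a branched cover. The lifting step is fine, and in fact the paper gives a one-line version using the horofunction picture: if $\iota$ is an isometric embedding and $\iota(X_t)\to h$, then $X_t\to h\circ\iota$, contradiction.

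The genuine gap is the heart of the argument, namely showing that the subsequential limit actually depends on the phase $r$. Two concrete problems. First, on $S_{0,5}$ the two horizontal core curves already form a pants decomposition (the complexity is $2$), so there is \emph{no} essential simple closed curve $\gamma_0$ disjoint from both $c_1$ and $c_2$ other than $c_1,c_2$ themselves; your proposed intermediate object does not exist. Second, even granting some surrogate, the step ``the quadratic form $(\lambda_1,\lambda_2)\mapsto\ext_{h_r q}(\lambda_1 c_1+\lambda_2 c_2)$ changes projective shape'' is asserted (``I expect the off-diagonal coupling \ldots'') but not proved; this is exactly where the difficulty lives, since the topological data $\mu_\gamma$ is constant in $r$ and only the conformal structure moves. ``Different moduli $\Rightarrow$ growing relative twist $\Rightarrow$ changing shape'' is heuristic, not a proof.

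The paper resolves this step by a different, more direct mechanism. It takes a \emph{symmetric} example (two cylinders of \emph{equal} modulus on a punctured pillowcase) so that the multitwist is $\phi=T_\alpha T_\beta$ with unit powers. Then $\ext(\alpha+\beta,X_s)$ is constant in $s$ because $\alpha+\beta$ is the horizontal foliation, while $\ext(\alpha,X_s)$ is shown to have a strict local maximum at $s=0$ by analyzing the unique extremal annulus for $\alpha$: that annulus sits in the open punctured cylinder $B$, hence embeds in every $X_s$, and equality in Jenkins' inequality would force the flat metric on the annulus to extend across the boundary identifications of $X_s$, which an elementary odd-function/identity-principle argument rules out for small $s\neq0$. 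Two explicit curves $\eta,\nu$ with $\mu_\eta=2\alpha$ and $\mu_\nu=2(\alpha+\beta)$ then have a ratio of limiting entries equal to $\ext^{1/2}(\alpha,X_s)/\sqrt{2}$, visibly non-constant. This replaces your speculative ``shape of the form'' analysis with a clean one-variable computation; if you want to salvage your route, the analogue you need is a rigorous proof that $\ext(c_1,h_r q)/\ext(w_1 c_1 + w_2 c_2,h_r q)$ is non-constant in $r$, and the paper's extremal-annulus argument shows how to do that.
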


This has the following immediate consequence,  which was first observed by Miyachi in genus two \cite[Section 8.1]{MiyachiLipschitz} (see also \cite[Section 4.3]{Alberge}).

\begin{cor}
In every Teichm\"uller space of complex dimension greater than one, there exists a Teichm\"uller disk whose isometric inclusion does not extend continuously to the horoboundaries.
\end{cor}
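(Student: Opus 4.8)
The plan is to obtain the corollary from \thmref{thm:main} by contraposition, so that all of the substance rests in the theorem. Let $q$ be a quadratic differential whose horocycle $\{h_t\, q \st t \in \RR\}$ fails to converge in the horofunction compactification of $\teich$, as supplied by \thmref{thm:main}, and let $D \subset \teich$ be the Teichm\"uller disk it generates. By construction the shearing path $\{h_t\, q\}$ lies in $D$, and, with the restriction of the Teichm\"uller metric, $(D, d_\teich|_D)$ is isometric to the hyperbolic plane $\HH$.

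First I would record the behaviour of this path inside $D$ by itself. Under the isometry $D \cong \HH$ the path $\{h_t\, q\}$ is a genuine horocycle in the hyperbolic plane, based at some $\xi \in \bdy\HH$. Since the horofunction compactification of $\HH$ coincides with its visual compactification, and since both ends of a horocycle limit onto its base point, the path converges to $\xi$ in the horofunction compactification $\overline{D}$ of $D$ as $t \to \pm\infty$.

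The corollary then follows at once. Suppose the isometric inclusion $D \hookrightarrow \teich$ extended to a continuous map $F \from \overline{D} \to \overline{\teich}$ of horofunction compactifications with $F|_D$ the inclusion. Applying $F$ to the convergence $h_t\, q \to \xi$ in $\overline{D}$ would yield $h_t\, q \to F(\xi)$ in $\overline{\teich}$, contradicting the fact that $\{h_t\, q\}$ has no limit there. Hence the inclusion admits no continuous extension to the horoboundaries, which is precisely the assertion of the corollary.

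I do not anticipate a genuine obstacle in this deduction; the entire content is carried by \thmref{thm:main}, and the argument merely translates ``the horocycle diverges'' into ``the disk inclusion does not extend continuously.'' What makes the statement non-vacuous, and worth isolating, is that an isometric embedding between metric spaces need not extend continuously to horofunction boundaries at all, so the two standard facts invoked above, that $(D, d_\teich|_D) \cong \HH$ and that a hyperbolic horocycle converges to its base point in the visual compactification, are exactly what turn the theorem into a clean obstruction to such an extension.
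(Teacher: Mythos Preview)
Your argument is correct and is exactly the deduction the paper has in mind: it declares the corollary an ``immediate consequence'' of \thmref{thm:main}, having already noted in the introduction that horocycles converge in the horofunction (equivalently visual) compactification of the hyperbolic plane. Your write-up simply spells out this contraposition, and there is nothing to add.
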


The example underlying \thmref{thm:main} is a horocycle generated by a Jen\-kins--Strebel quadratic differential $q$ with two cylinders on the five-times-punctu\-red sphere. The proof that this horocycle diverges consists in two parts. First, we show that for any $s \in \RR$, the sequence $(h_{s+n}\, q)_{n=1}^\infty$ (obtained by applying successive powers of a Dehn multitwist to $h_{s}\, q$) converges in the Gardiner--Masur boundary and we describe its limit. This is deduced from a more general criterion for convergence along mapping class group orbits (\lemref{lem:criterion}), which we also use to reprove that every horocycle or earthquake directed by a simple closed curve converges (\cite[Theorem 20]{Alberge} and \cite[Corollary 3.2]{JiangSu}). The second step is to show that the limit of this sequence depends on $s$. To prove this, it suffices to check that the extremal length of a certain simple closed curve $\alpha$ is not constant along the horocycle $h_{s} \, q$. In \lemref{lem:crux}, we show that the extremal length of $\alpha$ attains a strict local maximum at $s=0$ (and hence at all the integers by pe\-rio\-di\-city). This contrasts with the convexity of hyperbolic length along earthquakes \cite[Theorem 1]{KerckhoffNielsen} and complements the existence of local maxima for extremal length along Teichm\"uller geodesics \cite{nonconvex}.

As argued in \cite{WalshTeich}, the Gardiner--Masur compactification of Teichm\"uller space is best suited for problems concerning the conformal structure of surfaces whereas the Thurston compac\-ti\-fication is tailored for doing hyperbolic geometry. There is a well-known dictionary between the two worlds, partially given in Table \ref{table} below (see \cite[p.33]{Papadopoulos} for an extended version).

\begin{table}[htp] 
\caption{A dictionary between the conformal and hyperbolic aspects of Teichm\"uller theory} \label{table}
\centering
\begin{tabular}{ |c|c| } 
 \hline
Conformal & Hyperbolic \\
 \hline
 \hline
Quasiconformal homeomorpisms & Lipschitz maps \\
 \hline
Teichm\"uller metric & Thurston metric \\
 \hline
Extremal length & Hyperbolic length \\
 \hline
Measured foliations & Geodesic laminations \\
\hline
Gardiner--Masur compactification & Thurston compactification \\
\hline
Teichm\"uller rays & Stretch paths \\
\hline
Horocycles & Earthquakes \\
\hline
\end{tabular}
\end{table}

The analogy between the two points of view is reinforced by the fact that the Thurston compactification is isomorphic to the horofunction compac\-ti\-fication of the Thurston metric \cite{WalshThurs}. Since Teichm\"uller rays, stretch paths, and earthquakes all converge in their respective `compatible' compactification, it is somewhat surprising that horocycles do not.

\begin{acknowledgements}
I thank John Smillie for encouraging me to think about this problem and for useful discussions.
\end{acknowledgements}

\section{Definitions}

We begin by recalling some definitions and results needed in the sequel.

\subsection*{Teichm\"uller space} Let $S$ be an oriented surface with finitely generated fundamental group. The \emph{Teichm\"uller space} $\teich(S)$ is the set of equivalence classes $[(X,f)]$ of pairs $(X,f)$ where $X$ is a closed Riemann surface minus a finite set, the \emph{marking} $f:S \to X$ is an orientation-preserving homeomorphism, and two pairs $(X,f)$ and $(Y,g)$ are equivalent if the change of marking $g\circ f^{-1}:X \to Y$ is homotopic to a biholomorphism. 

The \emph{Teichm\"uller distance} between two points $[(X,f)]$ and $[(Y,g)]$ is $\frac12\log K$ where $K\geq 1$ is the smallest real number such that $g\circ f^{-1}$ is homotopic to a $K$-quasiconformal homeomorphism.

We will usually suppress the marking and the equivalence class from the notation and write $X \in \teich(S)$ to mean $[(X,f)]$ for some marking $f$.

\subsection*{Mapping class group}

The \emph{mapping class group} $\mcg(S)$ of a surface $S$ is the group of homotopy classes of orientation-preserving homeomorphisms of $S$ onto itself. This group acts on the left on homotopy classes of objects on $S$ (such as closed curves) and on the right on Teichm\"uller space by pre-composing the marking. That is, if $[(X,f)]\in \teich(S)$ and $[\phi] \in \mcg(S)$ then
\[
[(X,f)]\cdot[\phi] := [(X,f\circ \phi)].
\]
We will write $X \cdot \phi$ in lieu of the above if the marking is implicit.

\subsection*{Quadratic differentials}

A \emph{quadratic differential} on a Riemann surface $X$ is a map $q: TX \to \CC$ such that $q(\lambda v) = \lambda^2 q(v)$ for every $\lambda \in \CC$ and $v \in TX$. We only consider quadratic diferentials that are holomorphic and whose area $\int_X |q|$ is finite. A \emph{horizontal trajectory} for $q$ is a maximal smooth path $\gamma:\RR \to X$ such that  $q(\gamma'(t))>0$ for every $t \in \RR$.

\subsection*{Extremal length}

Let $\scc(S)$ be the set of homotopy classes of essential (not homotopic to a point or a puncture) simple (embedded) closed curves in $S$. We assume that $S$ is not a sphere with at most $3$ punctures so that $\scc(S)$ is non-empty.

The \emph{extremal length} of $[\gamma] \in \scc(S)$ on $[(X,f)] \in \teich(S)$ is 
\begin{equation} \label{eq:ext}
\ext(\gamma, X):=\sup_{\rho} \frac{\inf_{\alpha \sim f(\gamma)} \ell_\rho(\alpha)^2}{\area(\rho)}
\end{equation}
where $\ell_\rho(\alpha)$ is the length of $\alpha$ with respect to $\rho$ and the supremum is taken over all conformal metrics $\rho$ of finite positive area on $X$. 

Any Riemann surface $A$ with infinite cyclic fundamental group is biholomorphic to a Euclidean cylinder $C$, and the extremal length of either generator of its fundamental group is equal to the ratio of the circumference of $C$ to its height. We denote this number by $\ext(A)$. 

Since extremal length is monotone under conformal embeddings, we can estimate the extremal length of a curve from above using embedded annuli. 
\begin{thm}[Jenkins] \label{thm:Jenkins}
Let $\gamma \in \scc(S)$ and $X \in \teich(S)$, and let $A \subset X$ be an annulus such that the generators of $\pi_1(A)$ are homotopic to $\gamma$. Then
\[
\ext(\gamma, X) \leq \ext(A)
\]
with equality if and only if the pull-back of $dz^2$ under any biholomorphism from $A$ to a Euclidean cylinder $\{ z \in \CC : 0 < \im z < m\}/\ZZ$
extends to a quadratic differential on $X$. Such an extremal annulus always exists, and is unique if $S$ is not a torus.
\end{thm}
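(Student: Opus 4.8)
The plan is to establish the four assertions---the inequality, each direction of the equality criterion, and the existence and uniqueness of the extremal annulus---in turn.

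First I would fix a biholomorphism $\phi \colon A \to C_m := \{z \in \CC : 0 < \im z < m\}/\ZZ$ and set $q_0 := \phi^*(dz^2)$, so that the flat metric $\sqrt{|q_0|}$ makes $A$ a right cylinder of circumference $1$ and height $m$, whence $\ext(A) = 1/m$. The inequality is then immediate from the monotonicity noted just above the statement: the closed curves homotopic to $\gamma$ that are contained in $A$ form a subfamily of all closed curves homotopic to $\gamma$ in $X$, and enlarging a curve family cannot increase its extremal length, so $\ext(\gamma, X) \le \ext(\gamma, A) = \ext(A)$.

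For the sufficiency of the extension criterion, suppose $q_0$ is the restriction of a holomorphic quadratic differential $q$ on $X$ for which $A$ is the cylinder swept out by the closed horizontal trajectories homotopic to $\gamma$. These trajectories then foliate $X$ outside a critical graph of measure zero, so the flat metric $\rho = \sqrt{|q|}$ has area exactly $m$; moreover, writing $w$ for the (multivalued) natural coordinate of $q$, any $\beta$ homotopic to $\gamma$ satisfies $\ell_\rho(\beta) = \int_\beta |dw| \ge \bigl| \int_\beta d(\mathrm{Re}\, w) \bigr| = 1$ because $\beta$ winds once around the cylinder. Feeding $\rho$ into \eqnref{eq:ext} gives $\ext(\gamma, X) \ge 1/m$, which combined with the inequality yields equality. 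Conversely, if equality holds then $\sqrt{|q_0|}$ extended by $0$ is extremal for \eqnref{eq:ext}; by Beurling's criterion an extremal metric is the modulus of a holomorphic quadratic differential, and identifying it with $q_0$ on $A$ exhibits the required extension.

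To produce the extremal annulus I would solve the reciprocal problem of minimizing $\area(\rho)$ among conformal metrics with $\ell_\rho(\beta) \ge 1$ for every $\beta$ homotopic to $\gamma$; its value is $1/\ext(\gamma, X)$, finite and positive since $X$ has finite area. The admissible metrics form a convex set on which the area functional is lower semicontinuous with line elements bounded in $L^2$, so the direct method yields a minimizer, which Beurling's criterion identifies with $\sqrt{|q|}$ for a quadratic differential $q$ whose non-critical horizontal trajectories are all closed and homotopic to $\gamma$. The cylinder they sweep out is the desired extremal annulus. Uniqueness away from the torus follows from strict convexity of the area functional, which pins $\partial A$ to the critical graph of $q$; on the torus $q$ has no zeros, there is no critical graph, and the cylinder may be cut open along any one of its closed trajectories, so the annulus is not unique. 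The main obstacle is the structural step shared by the converse, the existence, and the uniqueness: showing that the extremal metric for \eqnref{eq:ext} is the flat metric of a holomorphic quadratic differential and analysing its horizontal trajectory structure (Jenkins--Strebel theory). Once it is known that the minimizer is such a metric and that its closed-trajectory cylinder fills $X$ up to a measure-zero graph, the inequality, the equality criterion, and uniqueness all fall out as above.
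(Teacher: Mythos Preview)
The paper does not prove this theorem. It is stated in the preliminaries section, attributed to Jenkins, and followed only by a remark pointing to \cite{Jenkins,Strebel,Renelt} for generalizations; the paper then \emph{uses} the result (chiefly the ``only if'' direction and the uniqueness clause) in the proof of \lemref{lem:crux}. So there is nothing to compare your proposal against.

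That said, two comments on your outline itself. First, in the sufficiency direction you quietly strengthen the hypothesis: the theorem only assumes that $q_0$ extends to a holomorphic quadratic differential on $X$, but you assume in addition that $A$ is the \emph{maximal} cylinder swept out by closed trajectories. Without that, your area computation ``$\area(\rho)=m$'' fails, since $X\setminus A$ may carry positive $|q|$-mass. In fact, as literally stated the ``if'' direction is false (take a proper straight sub-annulus of a Jenkins--Strebel cylinder); what is true, and what the paper actually uses, is the ``only if'' direction together with uniqueness of the extremal annulus. Second, you correctly identify that the substantive content---Beurling's extremality criterion and the Jenkins--Strebel structure theorem---is being invoked as a black box; your write-up is an accurate roadmap to those results rather than a self-contained proof, which is appropriate given that the paper treats the theorem the same way.
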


If $q$ is the quadratic differential alluded to in the theorem, then the supremum in \eqref{eq:ext} is realized only for the conformal metric $\sqrt{|q|}$ and its scalar multiples. Both statements can be generalized in three different ways to multicurves and collections of disjoint annuli \cite{Jenkins,Strebel,Renelt}.

The notion of extremal length can be extended from $\scc(S)$ to the set $\mf(S)$ of equivalence classes of \emph{measured foliations} on $S$ (see \cite{FLP} for the definition) by setting
\[
\ext(F, X) : = \int_X |q_F|
\]
for all $F \in \mf(S)$ and $X \in \teich(S)$, where $q_F$ is the unique quadratic differential on $X$ whose horizontal foliation is measure-equivalent to $F$ \cite{HubbardMasur,KerckhoffTeich}. The extremal length function $\ext: \mf(S) \times \teich(S) \to \RR$  is continuous, as well as homogeneous of degree $2$ in the first variable.

Although we will not use this here, we mention in passing that the Teich\-m\"uller distance can be recovered from extremal lengths via Kerckhoff's formula \cite{KerckhoffTeich}:
\[
d(X,Y) = \frac12 \log\left( \sup_{\gamma \in \scc(S)}  \frac{\ext(\gamma,Y)}{\ext(\gamma,X)}\right) \quad \text{for all }X,Y \in \teich(S).
\]

\subsection*{The \GM{} compactification}

Let $\RR_{\geq 0} := [0,\infty)$ be the set of non-negative real numbers. The projective space $\PP\left(\RR_{\geq 0}^{\scc(S)}\right)$ is the quotient of $\RR_{\geq 0}^{\scc(S)} \setminus \{ 0 \}$ by the action of $\RR_{>0}$ by multiplication. It is given the quotient topology inherited from the product topology on $\RR_{\geq 0}^{\scc(S)}$.

Gardiner and Masur \cite[Section 6]{GardinerMasur} showed that the map 
\[
\begin{array}{ccccl}
\Phi & :& \teich(S) &\to& \PP\left(\RR_{\geq 0}^{\scc(S)}\right) \\
&& X & \mapsto &\left[ \, \ext^{1/2}(\gamma,X) \,\right]_{\gamma \in \scc(S)}
\end{array}
\]
is an embedding, and that the closure of its image is compact. The \emph{Gardiner--Masur compactification} of $\teich(S)$ is the set $\overline{\Phi(X)} \subset \PP\left(\RR_{\geq 0}^{\scc(S)}\right)$, which we also denote by $\gm(S)$. A sequence $(X_n)_{n=1}^\infty \subset \teich(S)$ \emph{converges} to a projective vector $v \in \gm(S)$ if $\Phi(X_n) \to v$ as $n \to \infty$.  Besides in \cite{GardinerMasur}, this compactification has been studied in \cite{Miyachi1,Miyachi2,MiyachiSurvey,MiyachiLipschitz,MiyachiUnification,LiuSu,Alberge,JiangSu,WalshTeich}.

\subsection*{The horofunction compactification}

The \emph{horofunction compactification} of a proper metric space $(M,d)$ is the set of all locally uniform limits of functions $M \to \RR$  of the form \[y \mapsto d(y,x_n) - d(x_n,b)\] where
$b \in M$ is a fixed basepoint, and $(x_n)_{n=1}^\infty$ ranges over all sequences in $M$. Its elements are called \emph{horofunctions}, and their level sets are called \emph{horospheres}.

Liu and Su \cite{LiuSu} proved that the horofunction compactification of $\teich(S)$ equipped with the Teichm\"uller metric is isomorphic to the Gardiner--Masur compactification. We will mostly work with the Gardiner--Masur formulation except in \secref{sec:lift} where the horofunction point of view simplifies things.

\subsection*{Horocycles lie on horospheres}
It is interesting to note that any horocycle $h_t\, q$ obtained by shearing a quadratic differential $q$ travels along some horosphere, namely, a level set of the function
\begin{equation} \label{eq:levelset}
X \mapsto -\frac12 \log \ext(F,X)
\end{equation}
where $F$ is the horizontal foliation of $q$. Indeed, the horizontal foliation of $h_t\, q$ and its area does not depend on $t$, so that if $X_t$ denotes the underlying Riemann surface, then
\[
\ext(F, X_t) = \int_{X_t} |h_t\, q|  =  \int_{X_0} |q| = \ext(F, X_0)
\]
for all $t \in \RR$.
That \eqref{eq:levelset} defines a horofunction (in fact a Busemann function) when $F$ is a simple closed curve follows from the proof of \cite[Lemma 3.3]{nonconvex}. The density of weighted simple closed curves in $\mf(S)$ and the continuity of extremal length then imply that \eqref{eq:levelset} defines a horofunction for any $F \in \mf(S)$, though not a Busemann function (the limit of an almost geodesic ray) in general. 

The horocycle $h_t\, q$ also lies on a level set of the horofunction obtained as the forward limit of the Teichm\"uller geodesic $g_s\, q$ where
\[
g_s =\begin{pmatrix} e^{s} & 0 \\ 0 & e^{-s}  \end{pmatrix}.
\]
The resulting Busemann function can be determined by combining Liu and Su's isomorphism \cite[Section 5]{LiuSu} between the horofunction and Gardiner--Masur compactifications and Walsh's formula \cite[Corollary 1]{WalshTeich} for the limit of $g_s\, q$ in $\gm(S)$, though this gives a rather convoluted expression.

Our point is that the horocycle $\{ h_t\, q : t \in \RR\}$ is far from an arbitrary path---it is the intersection of one or more horospheres with a complex geodesic---yet it can still accumulate onto a non-trivial continuum in the horofunction boundary, as we will see in \secref{sec:example}.

\section{Convergence along mapping class group orbits}

Our first result is a sufficient criterion for a sequence in the mapping class group orbit of a point $X \in\teich(S)$ to converge in the \GM{} compactification.  The idea behind this criterion was already exploited in \cite[Theorem 7.2]{GardinerMasur} and \cite[Proposition 4.1]{JiangSu}.

\begin{lem}  \label{lem:criterion}
Let $(\phi_n)_{n=1}^\infty \subset \mcg(S)$ be a sequence of mapping classes. Suppose that there exists a sequence $(c_n)_{n=1}^\infty$ of positive real numbers and a non-zero function $f: \scc(S) \to \mf(S)$ such that $c_n \phi_n(\gamma)\to f(\gamma)$ as $n \to \infty$ for every $\gamma \in \scc(S)$. Then for every $X \in \teich(S)$, the sequence $X\cdot \phi_n$ converges to the projective vector
\[
\left[\ \ext^{1/2}(f(\gamma),X) \,\right]_{\gamma \in \scc(S)}
\]
 in the Gardiner--Masur compactification $\gm(S)$ as $n\to \infty$. 
\end{lem}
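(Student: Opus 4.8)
The plan is to transport the computation to the fixed reference marking on $X$, reduce the claim to coordinatewise convergence in the product space $\RR_{\geq 0}^{\scc(S)}$, and then push this forward through the quotient map onto $\PP\!\left(\RR_{\geq 0}^{\scc(S)}\right)$. First I would record the elementary equivariance of extremal length under the two actions: for any $\gamma \in \scc(S)$ and $\phi \in \mcg(S)$,
\[
\ext(\gamma, X \cdot \phi) = \ext(\phi(\gamma), X).
\]
This is immediate from the definition \eqref{eq:ext} once one unwinds that $X \cdot \phi$ carries the marking $f \circ \phi$ and that $(f \circ \phi)(\gamma) = f(\phi(\gamma))$, so the two infima are over the same homotopy class. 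In particular the Gardiner--Masur vector of $X \cdot \phi_n$ is $\bigl[\ext^{1/2}(\phi_n(\gamma), X)\bigr]_{\gamma}$.

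Next, regarding a simple closed curve as a measured foliation, the homogeneity of degree $2$ of $\ext$ in its first variable lets me rewrite
\[
c_n \, \ext^{1/2}(\phi_n(\gamma), X) = \ext^{1/2}(c_n \phi_n(\gamma), X).
\]
By hypothesis $c_n \phi_n(\gamma) \to f(\gamma)$ in $\mf(S)$, so the continuity of $\ext : \mf(S) \times \teich(S) \to \RR$ yields, for each fixed $\gamma$,
\[
c_n \, \ext^{1/2}(\phi_n(\gamma), X) \longrightarrow \ext^{1/2}(f(\gamma), X) \qquad \text{as } n \to \infty.
\]
Since the product topology on $\RR_{\geq 0}^{\scc(S)}$ is exactly the topology of coordinatewise convergence, this says that the vectors $\bigl(c_n \ext^{1/2}(\phi_n(\gamma), X)\bigr)_{\gamma}$ converge to $\bigl(\ext^{1/2}(f(\gamma), X)\bigr)_{\gamma}$ in $\RR_{\geq 0}^{\scc(S)}$.

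Finally I would descend to the projective quotient. Multiplying the Gardiner--Masur vector of $X \cdot \phi_n$ by the positive scalar $c_n$ does not change its projective class, so $\Phi(X \cdot \phi_n)$ is the image under the quotient map $\pi$ of the convergent sequence above. The single place where the non-triviality of $f$ is needed is in checking that the limiting vector is non-zero: picking $\gamma_0$ with $f(\gamma_0) \neq 0$ in $\mf(S)$, positivity of the extremal length of a non-trivial measured foliation gives $\ext(f(\gamma_0), X) > 0$, so the limit lies in $\RR_{\geq 0}^{\scc(S)} \setminus \{0\}$. Because $\pi$ is continuous there, the convergence in $\RR_{\geq 0}^{\scc(S)}$ descends to $\Phi(X \cdot \phi_n) \to \bigl[\ext^{1/2}(f(\gamma), X)\bigr]_{\gamma}$ in $\gm(S)$.

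The main point to watch is precisely this last step: without the hypothesis $f \neq 0$ the common limit could be the zero vector, which is excluded from projective space and where the quotient map is not defined, so the projective limit would fail to exist. Everything else is a formal consequence of the equivariance identity, the degree-$2$ homogeneity, and the continuity of extremal length as a function on $\mf(S) \times \teich(S)$.
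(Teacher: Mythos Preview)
Your argument is correct and follows the same route as the paper: equivariance of extremal length under the two actions, homogeneity, and continuity of $\ext$ on $\mf(S)\times\teich(S)$. You are in fact more careful than the paper in spelling out why the limit vector is nonzero (so that the projective quotient map is defined and continuous there), which is exactly where the hypothesis $f\neq 0$ enters.
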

\begin{proof}
Let $\gamma \in \scc(S)$. By definition of the mapping class group action on Teichm\"uller space, we have $\ext(\gamma, X\cdot \phi_n) = \ext(\phi_n(\gamma), X)$ for every $n\geq 1$.
It follows that
\[
c_n^2 \ext(\gamma, X\cdot \phi_n) = c_n^2\ext(\phi_n(\gamma), X)= \ext(c_n \phi_n(\gamma), X) \rightarrow  \ext(f(\gamma), X)
\]
as $n \to \infty$, by homogeneity and continuity of extremal length on $\mf(S)$. Thus $X\cdot \phi_n$ converges to the stipulated limit in $\gm(S)$ as $n\to \infty$.
\end{proof}

A \emph{Dehn multitwist} is a product $\tau_1^{n_1} \circ \cdots \circ \tau_k^{n_k}$  of non-zero integer powers $n_j$ of Dehn twists $\tau_j$ about the components $\alpha_j$ of a multicurve on a surface. We will apply the above criterion when $\phi_n=\phi^n$ is a sequence of powers of a Dehn multitwist $\phi$. In order to apply the criterion, we first need to understand the effect of $\phi$ and its powers on simple closed curves. The follo\-wing estimate from \cite[Lemma 4.2]{Ivanov} generali\-zing \cite[Proposition A.1]{FLP} is used to determine the projective limit of $\phi^{n}(\gamma)$ as $n \to \infty$ for any curve $\gamma \in \scc(S)$.

\begin{lem}[Ivanov] \label{lem:ivanov}
Let $\tau=\tau_1^{n_1} \circ \cdots \circ \tau_k^{n_k}$ be a Dehn multitwist about a multicurve $\{\alpha_1,\ldots,\alpha_k\}$ in a surface $S$. Then for any two essential simple closed curves $\gamma, \beta \in \scc(S)$ we have
\begin{align*}
 \sum_{j=1}^k (|n_j|-2) i(\gamma , \alpha_j)i(\alpha_j, \beta) - i(\gamma,\beta) & \leq i(\tau(\gamma), \beta) \\ & \leq  \sum_{j=1}^k |n_j| i(\gamma , \alpha_j)i(\alpha_j, \beta) + i(\gamma, \beta)
\end{align*}
where $i$ is the geometric intersection number.
\end{lem}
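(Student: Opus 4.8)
The plan is to prove the estimate by reducing it to a local intersection count inside disjoint annular neighborhoods of the components $\alpha_j$, exploiting the fact that the supports of the individual twists $\tau_j^{n_j}$ are disjoint. Fix a complete hyperbolic metric on $S$ and replace $\gamma$, $\beta$ and $\alpha_1,\ldots,\alpha_k$ by their geodesic representatives, which are pairwise in minimal position and whose $\alpha_j$ are mutually disjoint. Choose pairwise disjoint closed annular neighborhoods $N_1,\ldots,N_k$ of the $\alpha_j$, thin enough to avoid the finite set $\gamma \cap \beta$; then $\gamma \cap N_j$ consists of $i(\gamma,\alpha_j)$ arcs spanning $N_j$ and $\beta \cap N_j$ of $i(\alpha_j,\beta)$ such arcs. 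Since the $N_j$ are disjoint and each $\tau_j^{n_j}$ is supported in $N_j$, the multitwist $\tau$ admits a representative $\gamma'$ of $\tau(\gamma)$ that agrees with $\gamma$ outside $\bigcup_j N_j$ and inserts $n_j$ full twists into each spanning arc inside $N_j$.

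For the upper bound I would count the crossings of this explicit representative $\gamma'$ with the geodesic $\beta$; since $i(\tau(\gamma),\beta)$ is the minimum over all representatives, this count dominates it. Outside $\bigcup_j N_j$ the curve $\gamma'$ coincides with $\gamma$, contributing at most $i(\gamma,\beta)$ crossings. Inside $N_j$, a spanning arc carrying $n_j$ twists meets any fixed spanning arc of $\beta$ at most $|n_j|$ times once the twisting region is set up in the standard way; summing over the $i(\gamma,\alpha_j)\,i(\alpha_j,\beta)$ pairs of arcs and over $j$ yields the bound $\sum_j |n_j|\,i(\gamma,\alpha_j)\,i(\alpha_j,\beta) + i(\gamma,\beta)$.

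The lower bound is the substantive direction, because it must hold for the crossing-minimizing representative rather than for the convenient one. The mechanism is that the $n_j$-fold winding of each $\gamma$-strand around $\alpha_j$ forces crossings with the $\beta$-strands that cannot be undone by homotopy: within each annulus, a pair consisting of an $n_j$-times-twisted spanning arc and a spanning arc of $\beta$ is forced to cross at least $|n_j|-2$ times, the deficit of $2$ accounting for the freedom to slide endpoints off along the two boundary circles of $N_j$, while the original crossings outside the annuli can be cancelled against at most $i(\gamma,\beta)$ of them. I expect the main obstacle to be making this forcing rigorous: one must show that pulling $\tau(\gamma)$ and $\beta$ tight (or checking the bigon criterion directly) cannot reduce the intra-annular crossings below the stated count, and one must track the endpoint effects at the two boundaries of each $N_j$ carefully enough to obtain the asymmetric constants $|n_j|$ and $|n_j|-2$. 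A tempting shortcut---proving the single-twist case ($k=1$, which is \cite[Proposition A.1]{FLP}) and then inducting on $k$---does not directly deliver these clean constants, since the error term $i(\cdot,\beta)$ produced at each stage would be applied to the already-twisted curve $\tau_{j+1}^{n_{j+1}}\circ\cdots\circ\tau_k^{n_k}(\gamma)$ and would compound; the disjointness of the annuli is precisely what lets one count in all of them simultaneously and keep a single error term $i(\gamma,\beta)$.
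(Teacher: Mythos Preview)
The paper does not prove this lemma; it is quoted from Ivanov \cite[Lemma 4.2]{Ivanov} (generalizing \cite[Proposition A.1]{FLP}) and used as a black box. So there is no in-paper proof to compare against.

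As for your proposal itself: the strategy you describe is the standard one and is essentially Ivanov's argument. Your upper bound is complete. For the lower bound you have correctly isolated the real content---that the $|n_j|$-fold winding of each $\gamma$-strand in $N_j$ forces at least $|n_j|-2$ crossings with each $\beta$-strand after pulling tight, with the deficit of $2$ coming from the two boundary components of $N_j$---and you have correctly diagnosed why naive induction on $k$ fails and why the disjointness of the $N_j$ is essential. What remains is exactly what you flag: a rigorous bigon-criterion or minimal-position argument showing that no further cancellation can occur across the boundaries of the $N_j$. Ivanov handles this by first putting $\tau(\gamma)$ and $\beta$ in minimal position and then bounding from below the number of intersection points that must lie in each $N_j$; the bookkeeping is routine once one tracks how arcs of $\tau(\gamma)\cap N_j$ and $\beta\cap N_j$ alternate along $\partial N_j$. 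Your outline would become a proof with that step filled in.
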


In particular, for fixed curves $\gamma$ and $\beta$, the difference between $i(\tau(\gamma), \beta)$ and $\sum_{j=1}^k |n_j| i(\gamma , \alpha_j)i(\alpha_j, \beta)$ is bounded independently of the powers $n_j$. We apply this to successive powers of a fixed Dehn multitwist $\phi$.

\begin{cor} \label{cor:limit_dehn_twist}
Let $\phi= \tau_1^{n_1} \circ \cdots \circ \tau_k^{n_k}$ be a Dehn multitwist about a multicurve $\{\alpha_1,\ldots,\alpha_k\}$ in a surface $S$ and let $\gamma\in \scc(S)$ be any simple closed curve. Then $\phi^{n}(\gamma) /n$  converges to the weighted multicurve
\[
 \sum_{j=1}^k |n_j| i(\gamma , \alpha_j) \alpha_j
\]
in $\mf(S)$ as $n \to \infty$.
\end{cor}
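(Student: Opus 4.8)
The plan is to apply Ivanov's estimate (\lemref{lem:ivanov}) directly to powers of $\phi$, exploiting the fact that each power $\phi^n$ is again a Dehn multitwist about the \emph{same} multicurve. Since the twists $\tau_j$ are about the disjoint components of a multicurve, they commute, so $\phi^n = \tau_1^{n n_1} \circ \cdots \circ \tau_k^{n n_k}$; that is, $\phi^n$ is the multitwist with powers $n n_j$ in place of $n_j$.

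First I would fix an arbitrary test curve $\beta \in \scc(S)$ and apply \lemref{lem:ivanov} with $\tau = \phi^n$. Using $|n n_j| = n |n_j|$ for $n \geq 1$, the two-sided bound becomes
\[
\sum_{j=1}^k (n |n_j| - 2)\, i(\gamma, \alpha_j)\, i(\alpha_j, \beta) - i(\gamma, \beta) \ \leq\ i(\phi^n(\gamma), \beta) \ \leq\ \sum_{j=1}^k n |n_j|\, i(\gamma, \alpha_j)\, i(\alpha_j, \beta) + i(\gamma, \beta).
\]
Dividing by $n$ and letting $n \to \infty$, both the lower and upper bounds tend to $\sum_{j=1}^k |n_j|\, i(\gamma, \alpha_j)\, i(\alpha_j, \beta)$ (the terms $-2 i(\gamma,\alpha_j) i(\alpha_j,\beta)/n$ and $\pm i(\gamma,\beta)/n$ vanishing in the limit), so the squeeze theorem gives
\[
i\!\left( \frac{\phi^n(\gamma)}{n},\, \beta \right) = \frac{i(\phi^n(\gamma), \beta)}{n} \ \longrightarrow\ \sum_{j=1}^k |n_j|\, i(\gamma, \alpha_j)\, i(\alpha_j, \beta).
\]
By bilinearity of the intersection pairing on weighted multicurves, the right-hand side equals $i\big( \sum_{j=1}^k |n_j| i(\gamma, \alpha_j)\, \alpha_j,\ \beta \big)$.

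To turn this coordinatewise statement into convergence in $\mf(S)$, I would appeal to the fact that the intersection pairing embeds $\mf(S)$ homeomorphically into $\RR^{\scc(S)}$ (with the product topology) via $F \mapsto (i(F,\beta))_{\beta \in \scc(S)}$, as in \cite{FLP}. The limit vector above is the image of the genuine measured foliation $\sum_{j=1}^k |n_j| i(\gamma, \alpha_j)\, \alpha_j \in \mf(S)$, so the pointwise convergence of all intersection numbers is precisely convergence of $\phi^n(\gamma)/n$ to this weighted multicurve in $\mf(S)$. The only point demanding care is this last passage: coordinatewise convergence of intersection numbers yields convergence in $\mf(S)$ only because the candidate limit is already known to lie in $\mf(S)$, whereupon the homeomorphism property applies verbatim.
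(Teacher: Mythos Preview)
Your argument is correct and follows the same route as the paper: apply \lemref{lem:ivanov} to $\tau=\phi^n$, divide by $n$, and let the error terms vanish. Your write-up is simply more explicit than the paper's one-line proof, spelling out that $\phi^n=\tau_1^{nn_1}\cdots\tau_k^{nn_k}$ and justifying why coordinatewise convergence of intersection numbers yields convergence in $\mf(S)$.
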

\begin{proof}
Let $\beta \in \scc(S)$ be any simple closed curve. We need to show that 
\[
i(\phi^{n}(\gamma) /n, \beta)=\frac{1}{n}i(\phi^{n}(\gamma),\beta) \quad \text{converges to} \quad  \sum_{j=1}^k |n_j| i(\gamma , \alpha_j) i(\alpha_j,\beta)
\]
as $n \to \infty$. This follows from \lemref{lem:ivanov} applied to $\tau=\phi^n$ since the error terms tend to zero after dividing by $n$.
\end{proof}

Now that we know the projective limit of $\phi^{n}(\gamma)$ as $n \to \infty$, we can apply our criterion to deduce that the orbit of a point in Teichm\"uller space under the cyclic group generated by a Dehn multitwist converges in the Gardiner--Masur compactification (in either direction). Furthermore, we get a formula for the limit.

\begin{cor} \label{cor:dehn}
Let $\phi= \tau_1^{n_1} \circ \cdots \circ \tau_k^{n_k}$ be a Dehn multitwist about a multicurve $\{\alpha_1,\ldots,\alpha_k\}$ in a surface $S$ and let $X \in \teich(S)$. Then the sequence $X \cdot \phi^n$ converges to 
\[
\left[ \ext^{1/2} \left( \sum_{j=1}^k |n_j| i(\gamma , \alpha_j) \alpha_j,X\right) \right]_{\gamma \in \scc(S)}
\]
 in the Gardiner--Masur compactification $\gm(S)$ as $n\to \infty$.
\end{cor}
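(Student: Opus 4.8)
The plan is to combine \corref{cor:limit_dehn_twist} with the general convergence criterion \lemref{lem:criterion}, which is precisely the reason the former corollary was isolated. First I would fix the sequence of mapping classes to be $\phi_n = \phi^n$, the successive powers of the given Dehn multitwist, and set the scaling constants to be $c_n = 1/n$. With these choices in hand, the hypothesis of \lemref{lem:criterion} requires a non-zero function $f \from \scc(S) \to \mf(S)$ satisfying $c_n \phi_n(\gamma) \to f(\gamma)$ for every $\gamma \in \scc(S)$.

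The second step is to read off this limiting function directly from \corref{cor:limit_dehn_twist}, which asserts exactly that $\phi^n(\gamma)/n$ converges in $\mf(S)$ to the weighted multicurve $\sum_{j=1}^k |n_j|\, i(\gamma, \alpha_j)\, \alpha_j$. Thus I would define
\[
f(\gamma) := \sum_{j=1}^k |n_j|\, i(\gamma , \alpha_j)\, \alpha_j,
\]
so that $c_n \phi_n(\gamma) = \phi^n(\gamma)/n \to f(\gamma)$ is immediate from the corollary, with convergence holding for every $\gamma$ simultaneously. Feeding this $f$ into the conclusion of \lemref{lem:criterion} yields precisely the claimed limit $[\ext^{1/2}(f(\gamma),X)]_{\gamma \in \scc(S)}$ in $\gm(S)$.

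The one genuine verification I must carry out, rather than simply quote, is that $f$ is \emph{non-zero} as required by the hypothesis of the criterion; the criterion is only stated for non-zero $f$, and this is where I expect the only real care to be needed. Here I would argue that since each $\tau_j$ is a non-trivial Dehn twist about an essential curve $\alpha_j$ (so $n_j \neq 0$), one can choose a single curve $\gamma$ crossing some $\alpha_{j_0}$, giving $i(\gamma, \alpha_{j_0}) > 0$ and hence $f(\gamma) \neq 0$ in $\mf(S)$; this uses that distinct components of a multicurve have disjoint representatives and that a weighted multicurve is the zero foliation only when all weights vanish. With non-triviality of $f$ confirmed, the proof is a one-line invocation of the criterion, and no further computation is required.
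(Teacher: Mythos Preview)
Your proposal is correct and follows exactly the paper's approach: apply \lemref{lem:criterion} with $\phi_n=\phi^n$, $c_n=1/n$, and $f(\gamma)=\sum_{j=1}^k |n_j|\,i(\gamma,\alpha_j)\,\alpha_j$ supplied by \corref{cor:limit_dehn_twist}. The only difference is that you explicitly verify $f$ is non-zero, a point the paper leaves implicit.
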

\begin{proof}
\corref{cor:limit_dehn_twist} shows that the hypotheses of \lemref{lem:criterion} are satisfied for $\phi_n = \phi^n$, $c_n = 1/n$, and $f(\gamma) =  \sum_{j=1}^k  |n_j|i(\gamma , \alpha_j) \alpha_j$, from which the conclusion follows.
\end{proof}

Observe that the limit given in \lemref{lem:criterion} or \corref{cor:dehn} appears to depend on the initial point $X$. However, it may happen that for every surface $Y \in \teich(S)$ there is a constant $c>0$ such that \[\ext(f(\gamma), Y) =  c\ext(f(\gamma), X)\]  for all $\gamma \in \scc(S)$, which results in the projective vector 
\[
\left[\, \ext^{1/2}(f(\gamma),X) \,\right]_{\gamma \in \scc(S)}
\] being independent of $X$. This occurs if the image of $f$ is contained in a single ray, for example if $\phi$ is a Dehn twist about a single curve $\alpha\in\scc(S)$. In that case, $f(\gamma)=i(\gamma, \alpha) \alpha$ and $X \cdot \phi^n$ converges to
\[
\left[\, \ext^{1/2}(i(\gamma, \alpha) \alpha,X) \,\right]_{\gamma \in \scc(S)} = \left[\, i(\gamma, \alpha) \ext^{1/2}( \alpha,X) \,\right]_{\gamma \in \scc(S)} = \left[\, i(\gamma, \alpha) \,\right]_{\gamma \in \scc(S)}
\]
as $n \to \infty$. By varying $\alpha$ and taking limits, it follows that $\left[\, i(\gamma, F) \,\right]_{\gamma \in \scc(S)}$ belongs to $\gm(S)$ for any measured foliation $F \in \mf(S)$. That is, the Gardiner--Masur boundary contains the Thurston boundary of projective measured foliations (both are subsets of the same projective space), as was observed in \cite[Theorem 7.1]{GardinerMasur}. In the next section, we will give an example of a Dehn multitwist $\phi$ where the limit of $X \cdot \phi^n$ actually depends on $X$. 

\lemref{lem:criterion} can of course be applied to other sequences of mapping classes. For instance, if $\phi$ is a pseudo-Anosov with horizontal and vertical foliations $\calH$ and $\calV$ and stretch factor $\lambda>1$, then for any $\gamma\in \scc(S)$ the sequence $\lambda^{-n}\phi^{n}(\gamma)$ converges to $\frac{i(\gamma,\calV)}{i(\calH,\calV)}\, \calH$ as $n \to \infty$. In this case, $X\cdot\phi^n$ converges to
\begin{align*}
\left[\, \ext^{1/2}\left(\frac{i(\gamma,\calV)}{i(\calH,\calV)}\, \calH,X\right) \,\right]_{\gamma \in \scc(S)} &= \left[ \frac{i(\gamma,\calV)}{i(\calH,\calV)}\, \ext^{1/2}(\calH,X) \,\right]_{\gamma \in \scc(S)} \\ &= \left[\, i(\gamma,\calV)\,\right]_{\gamma \in \scc(S)}
\end{align*}
as $n \to \infty$, which is manifestly independent of $X$. 

When this phenomenon happens, that is, when the limit in \lemref{lem:criterion} is independent of $X$, we can promote convergence along sequences to convergence along paths.

\begin{prop}
Suppose that $\phi \in \mcg(S)$ is such that the sequence defined by $\phi_n:=\phi^n$ satisfies the hypotheses of \lemref{lem:criterion} and is such that the limit of $X\cdot \phi^n$ as $n \to \infty$ does not depend on $X$. Then every continuous path $\omega: \RR \to \teich(S)$ for which there is a $T>0$ such that $\omega(t+T) = \omega(t) \cdot \phi$ for all $t \in \RR$ converges to the same limit as $t \to \infty$.
\end{prop}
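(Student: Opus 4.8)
The plan is to reduce convergence of the path to the sequential convergence along powers of $\phi$ already in hand, exploiting the quasi-periodicity relation, the compactness of a single period, and the \emph{joint} continuity of extremal length.

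First I would iterate the hypothesis $\omega(t+T)=\omega(t)\cdot\phi$ to obtain $\omega(t+nT)=\omega(t)\cdot\phi^{n}$ for every integer $n\geq 0$. Thus for $t\geq 0$, writing $n=\lfloor t/T\rfloor$ and $s=t-nT\in[0,T)$, we have $\omega(t)=\omega(s)\cdot\phi^{n}$ with $n\to\infty$ as $t\to\infty$. Let $L:=\left[\ext^{1/2}(f(\gamma),X)\right]_{\gamma\in\scc(S)}$ denote the limit of $X\cdot\phi^{n}$ furnished by \lemref{lem:criterion}, which by hypothesis is the same for every $X\in\teich(S)$.

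It suffices to show that $\omega(t_m)\to L$ for an arbitrary sequence $t_m\to\infty$, and for this it is enough to check that every subsequence admits a further subsequence converging to $L$ (a valid criterion in any topological space). Given such a subsequence, write $t_m=s_m+n_mT$ as above; by Bolzano--Weierstrass I may pass to a sub-subsequence with $s_m\to s_*\in[0,T]$, whereupon continuity of $\omega$ gives $\omega(s_m)\to Y:=\omega(s_*)$ in $\teich(S)$. Now for each $\gamma\in\scc(S)$, the mapping class group action and degree-$2$ homogeneity of extremal length yield
\[
c_{n_m}^{2}\,\ext\big(\gamma,\,\omega(t_m)\big)=\ext\big(c_{n_m}\phi^{n_m}(\gamma),\,\omega(s_m)\big),
\]
and since $c_{n}\phi^{n}(\gamma)\to f(\gamma)$ in $\mf(S)$ while $\omega(s_m)\to Y$ in $\teich(S)$, the joint continuity of $\ext:\mf(S)\times\teich(S)\to\RR$ forces the right-hand side to tend to $\ext(f(\gamma),Y)$. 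As rescaling by the positive constant $c_{n_m}$ does not alter the projective class, this shows that $\Phi(\omega(t_m))$ converges coordinatewise (hence in $\gm(S)$) to $\left[\ext^{1/2}(f(\gamma),Y)\right]_{\gamma}$, which equals $L$ precisely because the limit in \lemref{lem:criterion} is independent of the base point. This supplies the required convergent further subsequence, and the criterion above then gives $\omega(t)\to L$.

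The main obstacle is the diagonal nature of the limit $\omega(s_m)\cdot\phi^{n_m}$, in which the base point and the twisting power vary simultaneously; \lemref{lem:criterion} only controls $\phi^{n}$ applied to a \emph{fixed} surface. The device that removes it is to freeze the base point along a subsequence using compactness of the single period $[0,T]$, and then to invoke the joint---rather than merely separate---continuity of extremal length on $\mf(S)\times\teich(S)$.
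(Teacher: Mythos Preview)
Your proof is correct and follows essentially the same approach as the paper: reduce to subsequences, use compactness of a single period $[0,T]$ to make the base point $\omega(s_m)$ converge, and then pass to the limit. The paper phrases the final step as an ``easy generalization'' of \lemref{lem:criterion} to sequences $X_n$ with $X_n\cdot\phi_n^{-1}\to X$, whereas you spell this out explicitly via the joint continuity of $\ext$ on $\mf(S)\times\teich(S)$; these are the same argument.
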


\begin{proof}
\lemref{lem:criterion} generalizes easily to sequences $(X_n)_{n=1}^\infty$ such that $X_n \cdot \phi_n^{-1}$ converges to some $X \in \teich(S)$ as $n \to \infty$, with the conclusion that $X_n$ converges to 
\[
\left[\ \ext^{1/2}(f(\gamma),X) \,\right]_{\gamma \in \scc(S)}
\]
as $n \to \infty$. By hypothesis, this limit $L$ is independent of $X$.

To prove the result, it suffices to show that any sequence $(t_n)_{n=1}^\infty \subset \RR$ tending to infinity admits a subsequence such that $\omega(t_{n_k}) \to L$ as $k \to \infty$. For each $n \geq 1$, let $m_n \in \ZZ$ be such that $s_n :=t_n-m_nT$ belongs to the interval $[0, T]$. Then $\omega(t_n) \cdot \phi^{-m_n} = \omega(s_n) $ for every $n\geq 1$. Since $[0,T]$ is compact and $\omega$ is continuous, there is an $s \in [0,T]$ and a subsequence such that $s_{n_k} \to s$ and $ \omega(s_{n_k}) \to \omega(s)$ as $k \to \infty$. By the previous paragraph, we get that $\omega(t_{n_k}) \to L$ as $k \to \infty$. 
\end{proof}

If $\phi$  is a Dehn twist about a curve $\alpha \in \scc(S)$, then the above proposition implies that every horocycle directed by a Jenkins--Strebel differential with a single cylinder homotopic to $\alpha$ and every earthquake directed by $\alpha$ converges to $\left[\, i(\gamma, \alpha) \,\right]_{\gamma \in \scc(S)}$. This recovers \cite[Theorem 17]{Alberge} and \cite[Corollary 3.2]{JiangSu} respectively. If $\phi$ is a pseudo-Anosov, then we get that the axis of $\phi$ converges to $\left[\, i(\gamma,\calV)\,\right]_{\gamma \in \scc(S)}$ in the forward direction, where $\calV$ is the vertical foliation. Since the foliations of a pseudo-Anosov are uniquely ergodic, this also follows from \cite[Corollary 2]{Miyachi2} or \cite[Corollary 1]{WalshTeich}. On the other hand, the result applies to all $\phi$-invariant paths. 

\section{A divergent horocycle} \label{sec:example}

Let $S^1=\RR/\ZZ$ and let $C=S^1 \times [-1,1]$. Seal the top and bottom of $C$ shut via the relation $(x,y) \sim (-x,y)$ for all $(x,y)\in S^1 \times\{-1,1\}$ to create a pillowcase, and remove the four corners $(0,\pm 1)$ and $(1/2,\pm 1)$ as well as $(0,0)$ to get a five-times-punctured sphere $X$ equipped with a quadratic differential $q$ coming from the differential $dz^2$ on $C$. Applying the horocycle flow $h_t$ to $q$ results in a twisted punctured pillowcase denoted $X_t$. 

\begin{prop} \label{prop:diverges}
The horocycle $t \mapsto X_t$ defined above does not converge in the Gardiner--Masur compactification as $t \to \infty$.
\end{prop}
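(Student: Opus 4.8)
The plan is to recognize the unit-time return map of the horocycle as a Dehn multitwist, to read off the subsequential limits from \corref{cor:dehn}, and then to show that these limits depend on the starting phase. First I would describe the flat geometry of $q$. Its horizontal trajectories are the circles $S^1\times\{y\}$, and these sweep out two cylinders $A_+=S^1\times(0,1)$ and $A_-=S^1\times(-1,0)$, separated by the singular leaf through the puncture $(0,0)$. Their core curves $\alpha_+$ and $\alpha_-$ are disjoint separating curves, and each cylinder has modulus $1$. Since $h_1$ shears $A_+$ and $A_-$ by a full circumference in opposite senses, it induces the Dehn multitwist $\phi=\tau_{\alpha_+}\circ\tau_{\alpha_-}^{-1}$, so that $X_{t+1}=X_t\cdot\phi$ for all $t$.

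For each fixed $s$ the sequence $X_{s+n}=X_s\cdot\phi^n$ then falls under \corref{cor:dehn}. Writing $f(\gamma)=i(\gamma,\alpha_+)\,\alpha_++i(\gamma,\alpha_-)\,\alpha_-$, it converges in $\gm(S)$ to
\[
L(s):=\big[\,\ext^{1/2}(f(\gamma),X_s)\,\big]_{\gamma\in\scc(S)}.
\]
I would argue by contradiction: if the whole horocycle converged to some $L\in\gm(S)$, then taking $t=s+n\to\infty$ would force $L=L(s)$ for every $s\in[0,1)$, so that the projective vector $L(s)$ is independent of $s$. To contradict this it suffices to find two coordinates of $L(s)$ whose ratio is non-constant.

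For the reference coordinate I would choose a simple closed curve $\gamma_0$ with $i(\gamma_0,\alpha_+)=i(\gamma_0,\alpha_-)$, so that $f(\gamma_0)$ is a multiple of the horizontal foliation $\alpha_++\alpha_-$ (the same for every $h_s\,q$); its coordinate $\ext^{1/2}(f(\gamma_0),X_s)$ is then a positive constant, since $\ext(\alpha_++\alpha_-,X_s)=\int_{X_s}|h_s\,q|=\int_{X_0}|q|$ does not depend on $s$. It therefore remains to exhibit a single curve $\alpha$ with $i(\alpha,\alpha_+)\neq i(\alpha,\alpha_-)$ for which the coordinate $\ext(f(\alpha),X_s)$ is non-constant; here $f(\alpha)$ is the unbalanced multicurve $i(\alpha,\alpha_+)\alpha_++i(\alpha,\alpha_-)\alpha_-$, which is not proportional to the horizontal foliation of $h_s\,q$.

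This is \lemref{lem:crux}, and it is where the real work lies. Because $\phi$ fixes both $\alpha_+$ and $\alpha_-$, the function $s\mapsto\ext(f(\alpha),X_s)$ is periodic of period $1$; and because the orientation-reversing isometry $(x,y)\mapsto(-x,y)$ fixes the punctures and each cylinder while conjugating $h_s$ to $h_{-s}$, this function is also even in $s$, hence critical at every integer. The main obstacle is to upgrade this to a \emph{strict} local maximum at $s=0$, which is what makes the function genuinely non-constant. I would realize $\ext(f(\alpha),X_s)$ as the area $\int_{X_s}|q'_s|$ of the Jenkins--Strebel differential $q'_s$ whose horizontal foliation is $f(\alpha)$, track how the circumferences of its two cylinders respond to the relative twisting measured by $s$, and bound this area from above by the moduli of explicitly chosen embedded annuli in the twisted pillowcase via \thmref{thm:Jenkins}, matching the bound at $s=0$ with the value read off from the extremal metric $\sqrt{|q'_s|}$. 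The shear tilts the competing annuli and should degrade the estimate to second order in $s$, yielding the strict maximum. Granting this, $\ext(f(\alpha),X_s)$ is non-constant, so $L(s)$ depends on $s$, contradicting $L=L(s)$ for all $s$; hence the horocycle cannot converge in $\gm(S)$.
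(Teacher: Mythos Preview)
Your overall strategy is correct and matches the paper's: identify the unit-time return as a Dehn multitwist, apply \corref{cor:dehn} to get subsequential limits $L(s)$, and then show $L(s)$ varies with $s$ by comparing two coordinates. Two points need fixing.

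First, a minor one. The shear $h_1$ does \emph{not} twist the two cylinders in opposite senses: on each cylinder the boundary nearer $|y|=1$ moves to the right relative to the boundary nearer $y=0$, so both are right Dehn twists. The paper takes $\phi=\tau_{\alpha}\circ\tau_{\beta}$ with both positive. Fortunately this does not affect anything, since \corref{cor:dehn} only involves $|n_j|$, and your $f(\gamma)=i(\gamma,\alpha_+)\alpha_++i(\gamma,\alpha_-)\alpha_-$ is the same either way.

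Second, and more substantively, your reduction to \lemref{lem:crux} is misaligned with what that lemma actually asserts. \lemref{lem:crux} is about the extremal length of a \emph{core curve} $\alpha_-$, not of an arbitrary unbalanced combination $a\alpha_++b\alpha_-$. The paper closes this gap by choosing a specific test curve $\eta$ that lives in the three-holed sphere bounded by $\alpha_+$ on the side containing the middle puncture, so that $i(\eta,\alpha_+)=0$ and $i(\eta,\alpha_-)=2$; then $f(\eta)=2\alpha_-$ and the $\eta$-coordinate of $L(s)$ is $2\,\ext^{1/2}(\alpha_-,X_s)$, which is exactly what \lemref{lem:crux} controls. You should make this choice explicit rather than leaving $a,b>0$ generic; otherwise you are proposing to prove a harder statement than the one in the paper.

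Finally, your plan for the strict local maximum---tracking the two-cylinder Jenkins--Strebel differential for $a\alpha_++b\alpha_-$ under shear and bounding moduli of tilted annuli---is quite different from the paper's argument and too vague as written. The paper's proof of \lemref{lem:crux} is cleaner: by the reflection symmetry of $X_0$, the unique extremal annulus $A$ for $\alpha_-$ avoids the top and bottom slits, hence sits inside the open punctured cylinder $B=S^1\times(-1,1)\setminus\{(0,0)\}$, which embeds conformally in every $X_s$. Thus $\ext(\alpha_-,X_s)\le\ext(A)=\ext(\alpha_-,X_0)$ for all $s$ by \thmref{thm:Jenkins}, with equality iff the flat metric on $A$ extends to a quadratic differential on $X_s$, i.e., iff the boundary regluing of $B$ is a $\psi$-isometry. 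A short analytic argument (the uniformizing map of $A$ is odd but not affine on the boundary circle) shows this fails for all small $s\neq 0$.
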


The proof proceeds by finding distinct limits of sequences going to infinity along the path. This is sufficient since the projective space $\PP\left(\RR_{\geq 0}^{\scc(X)}\right)$ is Hausdorff, so that limits are unique when they exist.

 Consider the homeomorphism $\phi$ obtained by applying a right Dehn twist about each of the horizontal curves $\alpha$ and $\beta$ at heights $-1/2$ and $1/2$ in $X$. Then $X_{s+n} = X_s \cdot \phi^n$ for every $s \in \RR$ and $n \in \ZZ$. Indeed, $\phi$ can be realized by the matrix \[h_{1} = \begin{pmatrix} 1 & 1 \\ 0 & 1 \end{pmatrix}\] and the marking for $X_{s+n}$ is given by $h_{s+n} = h_s \circ h_n = h_s \circ \phi^{n}$ (recall that mapping classes act on Teichm\"uller space on the right by pre-composing the marking, while the horocycle flow acts on the left by post-composing charts with matrices). By \corref{cor:dehn}, the sequence $X_{s+n}$ converges to the projective vector
\begin{equation} \label{eq:limit}
v_s = \left[ \ext^{1/2} \left( i(\gamma , \alpha) \alpha  + i(\gamma , \beta) \beta ,X_s\right) \right]_{\gamma \in \scc(S)}
\end{equation}
as $n \to \infty$.
We will show that the limit $v_s$ is not a constant function of $s$, which implies \propref{prop:diverges}.

We first observe that $\ext\left( \alpha  + \beta ,X_s\right)$ is constant equal to $2$ since the qua\-dra\-tic differential $h_s q$ has horizontal foliation $\alpha+\beta$ and area $2$. Suppose on the other hand that $\ext\left( \alpha,  X_s\right)$ is not constant in $s$. Then the projective vector $v_s$ depends on $s$, since we can find simple closed curves $\eta$ and $\nu$ in $X$ such that 
\[
i(\eta , \alpha) \alpha  + i(\eta , \beta) \beta = 2\alpha \quad \text{and} \quad i(\nu , \alpha) \alpha  + i(\nu , \beta) \beta  = 2(\alpha+\beta)\] 
(see \figref{fig:curves}). The ratio of the corresponding entries in $v_s$ is then 
\[
\frac{\ext^{1/2} \left( \alpha  ,X_s \right)}{\ext^{1/2} \left( \alpha + \beta,X_s\right)} = \frac{\ext^{1/2} \left( \alpha ,X_s \right)}{\sqrt{2}},
\] which is non-constant by hypothesis.

\begin{figure} 
\centering
\includegraphics[scale=.9]{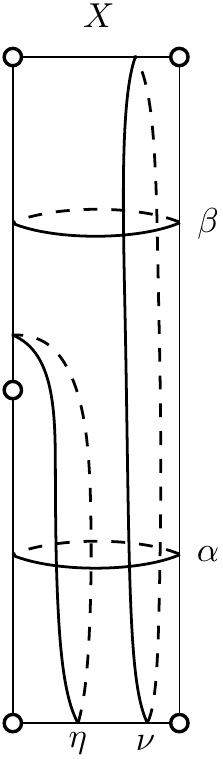}
\caption{The punctured pillowcase $X$ with some curves on it.} \label{fig:curves}
\end{figure}

We have thus reduced \propref{prop:diverges} to showing that $\ext\left( \alpha ,X_s\right)$ is not constant in $s$, which is our next result.

\begin{lem} \label{lem:crux}
The function $s \mapsto \ext\left( \alpha,  X_s\right)$ attains a strict local maximum at zero.
\end{lem}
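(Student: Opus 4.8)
The plan is to read $\ext(\alpha, X_s)$ off the widest embedded annulus homotopic to $\alpha$ and to control how this width changes with the shear $s$. Recall that $q = dz^2$ splits $X_s$ into two flat cylinders of circumference and height both equal to $1$, one with core $\alpha$ and one with core $\beta$, glued along the critical horizontal leaf at height $0$, on which the central puncture $(0,0)$ sits. A horizontal shear preserves circumferences and heights, so each of these two cylinders has $\ext = 1$ for every $s$; the $s$-dependence of $\ext(\alpha, X_s)$ must therefore come entirely from how far an annulus about $\alpha$ can be pushed across the interface leaf, that is, from the position of the puncture $(0,0)$ relative to the twisted structure. By Jenkins' theorem (\thmref{thm:Jenkins}), $\ext(\alpha, X_s) = \ext(\mathcal A_s)$ for the unique extremal annulus $\mathcal A_s$ separating the two bottom punctures from the other three, and for any competing embedded annulus $\mathcal B$ about $\alpha$ we have $\ext(\alpha, X_s) \le \ext(\mathcal B)$.

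First I would show that $s \mapsto \ext(\alpha, X_s)$ is even, so that this (real-analytic) function has a critical point at $s = 0$. The reflection $r\colon (x,y) \mapsto (-x,y)$ preserves the flat structure anticonformally, fixes $\alpha$ setwise, and satisfies $r \circ h_s = h_{-s} \circ r$, so it is an anticonformal homeomorphism $X_s \to X_{-s}$ carrying $\alpha$ to $\alpha$; since extremal length is preserved by anticonformal maps, $\ext(\alpha, X_s) = \ext(\alpha, X_{-s})$. (The conformal involution $(x,y)\mapsto(-x,-y)$ of $X_0$ exchanges $\alpha$ and $\beta$, so in fact $\ext(\alpha, X_s) = \ext(\beta, X_s)$ for all $s$, a convenient check against $\ext(\alpha+\beta, X_s) \equiv 2$.)

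The decisive step is to promote this critical point to a strict local maximum. I would first pin down $\ext(\alpha, X_0)$ exactly: at $s = 0$ the surface has enough symmetry that the extremal annulus $\mathcal A_0$, equivalently its defining Jenkins--Strebel differential $q_\alpha$, is determined by the constraints imposed by $r$ and $(x,y)\mapsto(-x,-y)$ together with the requirement that every horizontal leaf of $q_\alpha$ be homotopic to $\alpha$. Then, for $0 < |s| < \eps$, I would construct an explicit embedded annulus $\mathcal B_s$ about $\alpha$ that spirals around the puncture $(0,0)$, using the twist to gain transverse room as it crosses the interface leaf, and estimate $\ext(\mathcal B_s)$. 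The expectation is that the spiralling makes $\mathcal B_s$ strictly wider than $\mathcal A_0$, so that $\ext(\alpha, X_s) \le \ext(\mathcal B_s) < \ext(\mathcal A_0) = \ext(\alpha, X_0)$ for all small $s \neq 0$; the gain should be quadratic in $s$, matching the vanishing first derivative.

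The main obstacle is exactly this strict comparison. The shearing is performed with respect to $q$, whereas $\mathcal A_s$ is governed by the \emph{different} differential $q_\alpha$, and the puncture $(0,0)$ lies precisely on the seam between the two cylinders, where any annulus about $\alpha$ is most pinched; estimating the width gained there, uniformly for all small $s \neq 0$ and strongly enough to beat $\ext(\mathcal A_0)$, is delicate. A cleaner alternative may be to compute the second variation of $s \mapsto \ext(\alpha, X_s)$ directly. The horocycle direction is the Beltrami differential $\tfrac{i}{2}\,\bar q/|q|$, whose Gardiner pairing $\int \mu\, q_\alpha$ with the extremal differential is purely imaginary, so the first variation vanishes automatically; one would then show that the second variation is strictly negative, again using the symmetry to reduce the relevant integrals, in the spirit of the local-maximum analysis along Teichm\"uller geodesics in \cite{nonconvex}.
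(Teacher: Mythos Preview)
Your outline has the right shape---evenness, then strictness---but the geometric picture is off, and the step you flag as ``delicate'' is in fact unnecessary once you locate the $s$-dependence correctly. The shear $h_s$ fixes the height-zero leaf pointwise (since $(x,0)\mapsto(x,0)$), so the puncture $(0,0)$ does not move; more to the point, the entire open punctured cylinder $B=S^1\times(-1,1)\setminus\{(0,0)\}$ carries \emph{exactly the same} conformal structure inside every $X_s$. What changes with $s$ is only the folding at heights $\pm1$, which becomes $(x,1)\sim(2s-x,1)$ and $(x,-1)\sim(-2s-x,-1)$. So the $s$-dependence lives at the pillow seams, not at the interface leaf, and no ``spiralling around $(0,0)$'' can buy anything.

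This observation gives the non-strict inequality for free, without building any competitor $\mathcal B_s$. The anticonformal involution $(x,y)\mapsto(-x,y)$ of $X_0$ fixes $\alpha$, so by uniqueness the extremal annulus $\mathcal A_0$ is invariant; this forces $\mathcal A_0$ to avoid the top and bottom edges, hence $\mathcal A_0\subset B$. Since $B$ sits conformally in every $X_s$, so does $\mathcal A_0$, and \thmref{thm:Jenkins} yields $\ext(\alpha,X_s)\le\ext(\mathcal A_0)=\ext(\alpha,X_0)$ for all $s$.

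Strictness is then obtained not by producing a wider annulus but by the equality clause of Jenkins' theorem: equality would force the flat differential $\psi$ on $\mathcal A_0$ to extend holomorphically across the seams of $X_s$, i.e., the folding $x\mapsto -2s-x$ on the bottom circle of $B$ would have to be an isometry for the boundary metric induced by $\psi$. The paper rules this out for small $s\neq0$ by a short real-analytic argument: the boundary parametrisation $g$ coming from a uniformising cylinder for $\mathcal A_0$ is odd, so $g''(0)=0$, but $g''\not\equiv0$ (else $\mathcal A_0$ would be a round sub-cylinder, impossible because of the central puncture); the identity principle and Rolle's theorem then exclude the equality $g'(u)=g'(v)$ that the isometry condition would impose.

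Your second-variation alternative would face the same obstacle: the pairing $\int\mu\,q_\alpha$ being purely imaginary only reproves the vanishing first derivative already given by evenness, and establishing strict negativity of the Hessian would require just as much control over $q_\alpha$ as the boundary-isometry argument above---which you have not supplied.
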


\begin{proof} The curve $\alpha$ is invariant under the anti-conformal involution  of $X$ given by $(x,y) \mapsto (-x,y)$ in cylinder coordinates. The embedded annulus $A \subset X$ realizing the extremal length of $\alpha$ is also invariant under that symmetry since it is unique. It follows that $A$ is disjoint from from the top and bottom edges of the punctured pillowcase $X$. In other words, $A$ is contained in the punctured cyclinder $B=S^1 \times (-1,1) \setminus \{(0,0)\}$. The latter embeds conformally in $X_s$ for every $s\in \RR$.  Indeed, $B$ is clearly invariant under the horocycle flow. It is the identifications along its boundary that change to $(x,1) \sim (2s-x,1)$ and $(x,-1) \sim (-2s-x,-1)$ in order to obtain $X_s$ (after puncturing at the folding points).

Since $A$ embeds conformally in $B$ and then in $X_s$ (in the same homotopy class as $\alpha$), \thmref{thm:Jenkins} tells us that 
\[
\ext\left( \alpha ,X_s\right) \leq \ext(A) = \ext\left( \alpha ,X\right)
\] with equality if and only if the standard quadratic differential $\psi$ on $A$ (which pulls back to $dz^2$ in cylindrical coordinates) extends to a quadratic differential on $X_s$. In turn, this happens if and only if the gluing used to obtain $X_s$ from $B$ is isometric with respect to $\psi$.

\begin{figure} 
\centering
\includegraphics[scale=.9]{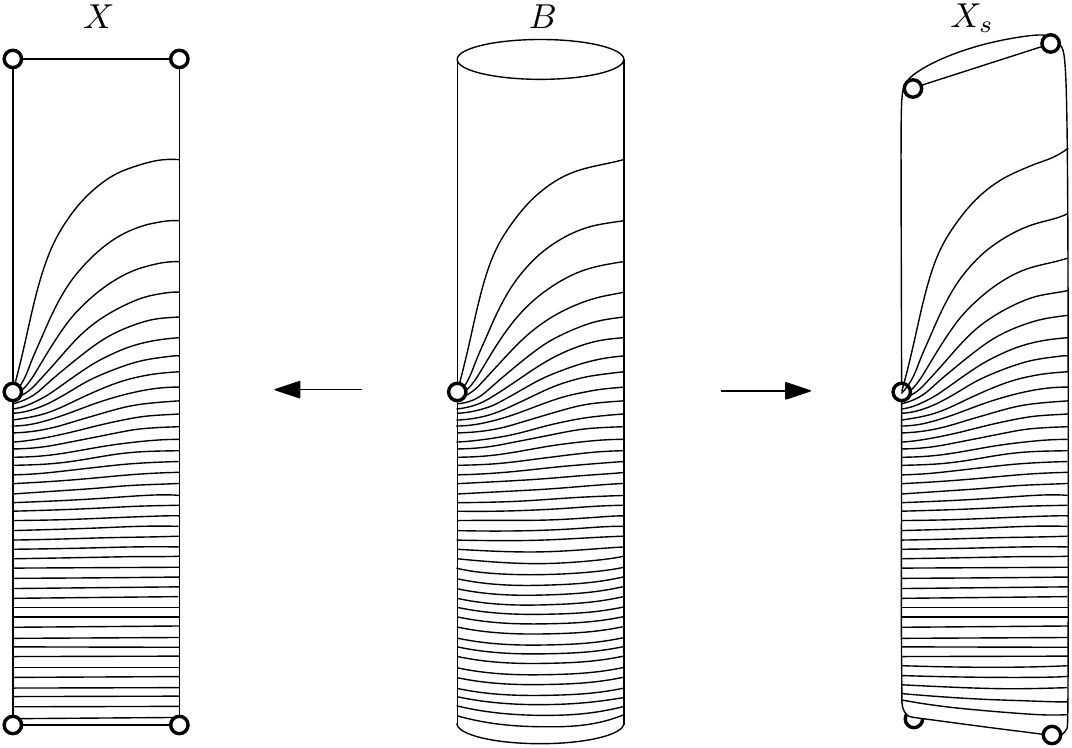}
\caption{The quadratic differential realizing the extremal length of $\alpha$ on the punctured cylinder $B$ extends to a quadratic differential on $X$ but not on $X_s$ for any small $s\neq 0$.} \label{fig:diff}
\end{figure}

The identifications $(x,1) \sim (2s-x,1)$ and $(x,-1) \sim (-2s-x,-1)$ on the top and bottom boundaries are of course isometries with respect to the quadratic differential $dz^2$ on $B$, but we claim that there is a neighborhood $N$ of $0$ in $\RR$ such that they are not isometries with respect to $\psi$ for any $s \in N\setminus \{ 0 \}$. The main reason for this is that $\psi$ is not rotationally symmetric.

We focus on the bottom boundary. Let $h : S^1 \times (0,m) \to A$ be a biholomorphism between a Euclidean cylinder and $A$ chosen to be equivariant under the symmetry $(x,y) \mapsto (-x,y)$, where the $(x,y)$-coordinates in the target are those from $B$. This map $h$ followed by the inclusion $\iota : A \hookrightarrow B$ extends to an odd analytic diffeomorphism $g$ bet\-ween the bottom circles of $S^1 \times (0,m)$ and $B$. Since $g$ is odd, we have $g''(0)=0$. On the other hand, $g''$ is not constant equal to zero since $\iota\circ h$ is not affine.  By the identity principle, there is a neighborhood $U$ of $0$ in $S^1 \times \{0 \}$ such that $g''(x) \neq 0$ for every $x \in U\setminus \{0\}$. We may assume that $U$ is connected, in which case we deduce that $g'(u)\neq g'(v)$ for every $u,v \in U$ such that $u<v<0$ or $0<u<v$ by Rolle's theorem.

In the cylinder coordinate $S^1 \times (0,m)$, the pull-back $h^*\psi$ becomes $dz^2$. Thus the line element $\sqrt{|\psi|}$ induced by $\psi$ on the bottom circle of $B$ is the push-forward of the Euclidean line element $|dx|$ by $g$.
If the identification $x \sim -2s-x$ on the bottom of $B$ is an isometry with respect to $\psi$, then $(g^{-1})'(x) = (g^{-1})'(-2s - x)$ for every $x$. If $s>0$ is sufficiently small and $-2s<x<0$ then $u=g^{-1}(x)$ and $v=g^{-1}(-2s-x)$ are both in $U$, to the left of $0$, and satisfy \[g'(u)=\frac{1}{(g^{-1})'(x)}=\frac{1}{(g^{-1})'(-2s - x)}=g'(v),\] contradicting the previous paragraph. Similarly, if $s$ is negative and sufficiently close to zero then we can find a pair of points $u,v \in U$ with $0<u<v$ such that $g'(u)=g'(v)$.    This contradiction implies our claim that there is a neighborhood $N$ of $0$ in $\RR$ such that $\psi$ does not extend to a quadratic differential on $X_s$ for any $s \in N \setminus \{0\}$. We conclude that $\ext\left( \alpha ,X_s\right) < \ext\left( \alpha ,X_0\right)$ for every $s\in N \setminus \{0\}$.
\end{proof}

The above proof shows that $\ext\left( \alpha ,X_s\right)  \leq \ext\left( \alpha ,X_0\right)$ for all $s \in \RR$. The extremal length at $s=0$ can be computed using Schwarz--Christoffel transformations, and is approximately $0.8196442$. The horizontal trajectories of the corresponding quadratic differential are sketched in \figref{fig:diff}.

As stated in the introduction, the function $s \mapsto \ext(\alpha,X_s)$ is periodic since 
\[\ext\left( \alpha,  X_{s+n}\right) =  \ext\left( \alpha,  X_s \cdot \phi^n\right) =  \ext\left( \phi^{n}(\alpha),  X_s \right)= \ext\left( \alpha,  X_s \right)
\]
for every $s\in \RR$ and $n \in \ZZ$. Hence it has strict local maxima at all the integers. 

By a similar reasoning, the projective vector $v_s$ from \eqnref{eq:limit} is $\ZZ$-periodic (and in fact $\frac12\ZZ$-periodic since $\phi$ has a square root preserving both $\alpha$ and $\beta$) and invariant under $s \mapsto -s$.  We think that $s\mapsto v_s$ is injective on $[0,1/4]$ so that the horocycle $t \mapsto X_t$ accumulates onto an interval in the Gardiner--Masur boundary. It would be interesting to find examples with larger limit sets.

\section{Lifting the example to higher complexity} \label{sec:lift}

Let $S_{g,p}$ be an oriented surface of genus $g$ with $p$ punctures. The following lemma is taken from \cite[Lemma 7.1]{Gekhtman}. 

\begin{lem}[Gekhtman--Markovic] \label{lem:lift}
 If $3g-3+p>1$, then there is a branched cover $\overline{S_{g,p}} \to \overline{S_{0,5}}$ that branches at all pre-images of marked points that are not marked and induces an isometric embedding $\teich(S_{0,5}) \hookrightarrow \teich(S_{g,p})$. 
\end{lem}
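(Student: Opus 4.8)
The plan is to decompose the statement into two essentially independent parts: a purely topological construction of the branched cover with the prescribed ramification behaviour, and the general principle that any such cover induces an isometric embedding of Teichm\"uller spaces by pullback of conformal structures. I would prove the second part first, since it is soft and insensitive to the particular surfaces involved, and reserve the combinatorial construction for last.

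For the embedding part, let $\pi : \overline{S_{g,p}} \to \overline{S_{0,5}}$ be any branched cover that is unramified away from the five marked points and whose marked points form a chosen subset of the fibres over the marked points downstairs, the remaining (unmarked) preimages being the branch points. Define $\Pi : \teich(S_{0,5}) \to \teich(S_{g,p})$ by pulling back complex structures, $\Pi(X) = \pi^* X$; this is well defined because a conformal structure on the base lifts through a holomorphic branched cover to one upstairs, the branch points and punctures being removable singularities, and $\Pi$ is holomorphic in the Teichm\"uller parameter. I would first establish the upper bound $d(\Pi X, \Pi Y) \le d(X, Y)$ by lifting the extremal quasiconformal map $f : X \to Y$: its lift $\tilde f$ is quasiconformal with the same dilatation (dilatation is a local invariant, unaffected by the covering) and lies in the correct homotopy class, so the minimal dilatation upstairs is no larger.

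For the reverse inequality I would exploit uniqueness of extremal maps. When $\pi$ is a \emph{regular} cover with finite deck group $G$, both $\Pi X$ and $\Pi Y$ are $G$-invariant and the change of marking upstairs is $G$-equivariant; since Teichm\"uller's uniqueness theorem provides a single extremal quasiconformal map in the given homotopy class, that map must itself be $G$-equivariant and hence descends to a quasiconformal map $X \to Y$ of the same dilatation. This gives $d(X,Y) \le d(\Pi X, \Pi Y)$, so in the regular case $\Pi$ is distance-preserving. For a general (non-regular) cover I would pass to its Galois closure $\hat\pi : \hat S \to \overline{S_{0,5}}$, which factors through $\pi$ and is still branched only over the marked points. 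The induced maps fit into a composition $\teich(S_{0,5}) \to \teich(S_{g,p}) \to \teich(\hat S)$ in which each arrow is $1$-Lipschitz by the lifting argument, while the composite is isometric by the regular case; the resulting chain of inequalities forces the first arrow to be isometric as well. Since a distance-preserving map is automatically an injective topological embedding, this yields the full conclusion.

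It then remains to construct the cover for every $(g,p)$ with $3g - 3 + p > 1$, which I would do via the Riemann--Hurwitz formula together with the Hurwitz realization of covers by monodromy. Writing $N$ for the degree and $n_j$ for the number of preimages over the $j$-th marked point, branching only over the five marked points gives
\[
\sum_{j=1}^{5} n_j = 3N + 2 - 2g,
\]
and declaring $p$ of the preimages to be marked forces the remaining $3N + 2 - 2g - p$ to be genuine ramification points. Realizing the cover amounts to producing permutations $\sigma_1, \dots, \sigma_5 \in S_N$ whose product is the identity, which generate a transitive subgroup, and whose cycle types are compatible with the prescribed fibre data. The main obstacle is precisely this simultaneous bookkeeping: one must hit the target genus $g$ and the exact puncture count $p$ while keeping the monodromy realizable, and it is here that the hypothesis $3g - 3 + p > 1$ enters, being exactly the range in which the numerical constraints above admit a solution with realizable cycle types (the boundary case $3g - 3 + p = 1$ is impossible, since a one-dimensional Teichm\"uller space cannot contain the two-dimensional image of $\teich(S_{0,5})$). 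The flexibility of having five branch points renders the monodromy realization routine once the numerology is arranged, so the crux of the lemma lies in this explicit combinatorial construction rather than in the isometry statement, which is formal.
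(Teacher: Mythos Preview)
The paper does not prove this lemma; it is quoted verbatim from Gekhtman--Markovic \cite[Lemma 7.1]{Gekhtman} and simply invoked as a black box. There is therefore no proof in the paper to compare against.

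As for the content of your proposal: the isometry half is correct and standard. The lifting argument for the $1$-Lipschitz bound, the equivariance-plus-uniqueness argument for regular covers, and the Galois-closure squeeze for irregular covers are exactly how this is usually done. The topological half, however, is only an outline. You correctly set up the Riemann--Hurwitz count $\sum_j n_j = 3N + 2 - 2g$ and identify that the problem reduces to realizing suitable monodromy data in $S_N$, but you do not actually produce the permutations, and the sentence ``the flexibility of having five branch points renders the monodromy realization routine once the numerology is arranged'' is precisely the step that needs to be done. In particular, the requirement that \emph{every} unmarked preimage be a genuine branch point (ramification index at least $2$) is a real constraint on the cycle types, not just on the cycle counts, and verifying that it can be met simultaneously with transitivity for all $(g,p)$ with $3g-3+p>1$ is the substance of the Gekhtman--Markovic argument. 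Your proposal correctly locates where the work lies but does not carry it out.
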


We use this to export the example from \propref{prop:diverges} to all Teichm\"uller spaces $\teich(S_{g,p})$ of complex dimension $3g-3+p>1$. We explain how this works from the horofunction point of view as well as from the Gardiner--Masur one.

\begin{proof}[First proof \thmref{thm:main}]
Let $t \mapsto X_t$ be the divergent horocycle in $S_{0,5}$ constructed in \propref{prop:diverges} and let  $\iota: \teich(S_{0,5}) \to \teich(S_{g,p})$ be any isometric embedding induced by a branched cover (\lemref{lem:lift}). Then $t \mapsto \iota(X_t)$ is a horocycle in $\teich(S_{g,p})$ since the $\SL(2,\RR)$-action on quadratic differentials commutes with the pull-back by the branched cover.

 Let $b \in \teich(S_{0,5})$ be any basepoint. We take $\iota(b)$ as the basepoint for the horofunction compactification of $\teich(S_{g,p})$ (the choice of basepoint only changes horofunctions by an additive constant). Suppose that $\iota(X_t)$ converges to a horofunction $h:\teich(S_{g,p}) \to \RR$ as $t \to \infty$. Then $X_t$ converges to the function $h \circ \iota$ as $t \to \infty$, a contradiction. Thus $\iota(X_t)$ diverges.
\end{proof}

\begin{proof}[Second proof \thmref{thm:main}]
Let $\pi:\overline{S_{g,p}} \to \overline{S_{0,5}}$ be a branched cover and let $\iota: \teich(S_{0,5}) \to \teich(S_{g,p})$ be the induced isometric embedding. For any simple closed curve $\gamma \in \scc(S_{0,5})$ and any $X \in \teich(S_{0,5})$, we have the identity
\[
\ext(\pi^{-1}(\gamma), \iota(X)) = d \cdot \ext(\gamma,X)
\]
where $d$ is the degree of $\pi$. Indeed, if $\theta$ is the quadratic differential on $X$ whose horizontal foliation is measure-equivalent to $\gamma$, then the horizontal foliation of  the pull-back differential $\pi^*\theta$ is measure-equivalent to $\pi^{-1}(\gamma)$ and the area of $\pi^*\theta$ is $d$ times that of $\theta$.

Let $t \mapsto X_t$ be the divergent horocycle from \secref{sec:example} directed by the quadratic differential $q$, with $\alpha, \beta, \eta, \nu \subset X$ the same curves as in \figref{fig:curves}. Then there exists some $m \in \NN$ such that for every $s \in \RR$ and $n \in \ZZ$ we have $\iota(X_{s+mn}) = \iota(X_{s}) \cdot \psi^n$ where $\psi$ is a Dehn multitwist about $\pi^{-1}(\alpha \cup \beta)$. Indeed, each component $c$ of $\pi^{-1}(\alpha \cup \beta)$  covers either $\alpha$ or $\beta$ with some degree $d_c \in \NN$, which may vary from one component to another. Thus, each cylindrical component of $\pi^*q$ corresponding to a curve $c$ has height $1$ and circumference $d_c$. If $m$ is the least common multiple of the degrees $d_c$, then the matrix $h_m$ performs a right Dehn twist to the power $m/d_c$ about each component $c$ of $\pi^{-1}(\alpha \cup \beta)$.

 In particular, the sequence $\iota(X_{s+mn})$ converges to some limit $w_s$ in the Gardiner--Masur compactification as $n \to \infty$ (\corref{cor:dehn}), but the  limit depends on $s$. Indeed, $\ext(\pi^{-1}(\alpha \cup \beta), \iota(X_s))$ is constant while 
\[
\ext(\pi^{-1}(\alpha), \iota(X_s)) = d \cdot \ext(\alpha, X_s)
\]
 is not by \lemref{lem:crux}. The only difference with the proof of \propref{prop:diverges} is that here $\pi^{-1}(\eta)$ and $\pi^{-1}(\nu)$ are not necessarily simple closed curves in $S_{g,p}$ (they might not be connected). However, the map $f: \scc(S_{g,p}) \to \mf(S_{g,p})$ defined by
\[
f(\gamma) = \sum_{c \subset \pi^{-1}(\alpha \cup \beta)} \frac{m}{d_c}\, i(\gamma ,  c) \,c
\]
(where the sum is over connected components) extends continuously to the space of measured foliations $\mf(S_{g,p})$. By fixing a small $s\neq 0$ such that $\ext(\alpha, X_s) \neq  \ext(\alpha, X_0)$ and by approximating $\pi^{-1}(\eta)$ and $\pi^{-1}(\nu)$ with simple closed curves $\gamma_n, \delta_n \in \scc(S_{g,p})$ we get that
\[
\frac{\ext^{1/2} \left( f(\gamma_n)  ,\iota(X_s) \right)}{\ext^{1/2} \left( f(\delta_n)  ,\iota(X_s) \right)} \neq \frac{\ext^{1/2} \left( f(\gamma_n)  ,\iota(X_0) \right)}{\ext^{1/2} \left( f(\delta_n)  ,\iota(X_0) \right)}
\]
if $n$ is large enough, and hence that $w_s \neq w_0$. Here we are using the fact that
\[
i(\pi^{-1}(\mu),c)= d_c\cdot i(\mu, \pi(c))
\]
for every $\mu \in  \scc(S_{0,5})$ and $c \in  \scc(S_{g,p})$, which implies that
\begin{align*}
f(\pi^{-1}(\eta)) &= \sum_{c \subset \pi^{-1}(\alpha \cup \beta)} \frac{m}{d_c}\, i(\pi^{-1}(\eta) ,  c) \,c \\
&= m \sum_{c \subset \pi^{-1}(\alpha \cup \beta)} i(\eta ,  \pi(c)) \,c \\
&= m \sum_{c \subset \pi^{-1}(\alpha)} i(\eta ,  \alpha) \,c  \\
&= 2m \, \pi^{-1}(\alpha)
\end{align*}
and similarly $f(\pi^{-1}(\nu))= 2m \,\pi^{-1}(\alpha\cup \beta)$.
\end{proof}

\section{Concluding remark}

In \cite[Section 6]{JiangSu}, Jiang and Su conjectured that there exist earthquakes directed by disconnected multicurves that do not converge in the Gardiner--Masur compactification. We agree with this intuition and further believe that all earthquakes and horocycles diverge except for those directed by indecomposable laminations or foliations.

\bibliographystyle{amsalpha}
\bibliography{biblio}

\end{document}